\def\weitdacherl#1{\mathop{\vbox{\m@th\ialign{##\crcr\noalign{\kern3\p@}%
      $\hfil{\scriptscriptstyle\frown}\hfil$\crcr\noalign{\kern1\p@\nointerlineskip}%
      $\displaystyle{#1}$\crcr}}}}
\def\weithutzl#1{\mathop{\vbox{\m@th\ialign{##\crcr\noalign{\kern3\p@}%
      $\hfil{\scriptscriptstyle\smile}\hfil$\crcr\noalign{\kern1\p@\nointerlineskip}%
      $\displaystyle{#1}$\crcr}}}}
\def\stern#1{\mathop{\vbox{\m@th\ialign{##\crcr\noalign{\kern3\p@}%
      $\hfil{\scriptscriptstyle *}\hfil$\crcr\noalign{\kern1\p@\nointerlineskip}%
      $\displaystyle{#1}$\crcr}}}}
\def\part{\@startsection{part}{0}%
  \z@{2.5\linespacing\@plus\linespacing}{1.0\linespacing}%
  {\LARGE\bfseries\raggedright}}
\newcommand\dirmin{\ensuremath{\weithutzl{\times}}\,}
\newcommand\dirmax{\ensuremath{\weitdacherl{\times}}\,}
\newcommand\dirnon{\ensuremath{\widetilde{\times}}\,}
\newcommand\dir{\ensuremath{\stern{\times}}\,}
\newcommand\strmin{\ensuremath{\weithutzl{\boxtimes}}\,}
\newcommand\strmax{\ensuremath{\weitdacherl{\boxtimes}}\,}
\newtheorem{thm}{Theorem}
\newtheorem{lemma}[thm]{Lemma}
\newtheorem{prop}[thm]{Proposition}
\theoremstyle{definition}
\theoremstyle{remark}
\numberwithin{equation}{section}
\crefname{thm}{}{}
\begin{document}
\sloppy
%
%
%
%
%
%
%
%
%
\title
 {Associativity and non-associativity of some hypergraph products}

\author{Richard H.\ Hammack}

\address{%
Department of Mathematics and Applied Mathematics\\ 
Virginia Commonwealth University\\ 
Richmond, VA 23284-2014, USA}

\email{rhammack@vcu.edu}

\author{Marc Hellmuth}

\address{%
Department of Mathematics and Computer Science\\
University of Greifswald\\
Walther-Rathenau-Stra{\ss}e 47, D-17487 Greifswald, Germany\\[0.1cm]
Center for Bioinformatics \\
Saarland University \\
Building E 2.1, Room 413,
P.O. Box 15 11 50,
D-66041 Saarbr\"{u}cken,
Germany }

\email{mhellmuth@mailbox.org}

\author{Lydia Ostermeier}
\address{%
Bioinformatics Group, \\ Department of Computer Science and
Interdisciplinary Center for Bioinformatics \\
University of Leipzig,\\
H{\"a}rtelstra{\ss}e 16-18, D-04107 Leipzig, Germany }
\email{glydia@bioinf.uni-leipzig.de}

\author{Peter F.\ Stadler}
\address{
Bioinformatics Group, \\
Department of Computer Science; and
Interdisciplinary Center for Bioinformatics,\\
University of Leipzig,\\
H{\"a}rtelstra{\ss}e 16-18, D-04107 Leipzig, Germany\\[0.1cm]
Max Planck Institute for Mathematics in the Sciences\\
Inselstrasse 22, D-04103 Leipzig, Germany\\[0.1cm]
RNomics Group, Fraunhofer Institut f{\"u}r Zelltherapie und Immunologie,
Deutscher Platz 5e, D-04103 Leipzig, Germany\\[0.1cm]
Department of Theoretical Chemistry,  University of Vienna,
  W{\"a}hringerstra{\ss}e 17, A-1090 Wien, Austria\\[0.1cm]
Santa Fe Institute, 1399 Hyde Park Rd., Santa Fe, NM87501, USA}
\email{studla@bioinf.uni-leipzig.de}

%
%

\dedicatory{}
\begin{abstract}
  Several variants of hypergraph products have been introduced as
  generalizations of the strong and direct products of graphs. Here we show
  that only some of them are associative. In addition to the Cartesian
    product, these are the minimal rank preserving direct product, and the
    normal product.  Counter-examples are given for the strong product as
    well as the non-rank-preserving and the maximal rank preserving direct
    product.
\end{abstract} 

\maketitle
\section{Introduction}

Hypergraphs are natural generalizations of undirected graphs in which
``edges'' may consist of more than two vertices. 
Products of hypergraphs have been well-investigated since the 1960s,
see e.g.\ 
\cite{Berge:Hypergraphs,Black15,Bretto09:HyperCartProd,Doerfler79:CoversDirectProd,HL:15,HON-14,
Imrich67:Mengensysteme,Imrich70:SchwachKartProd,KA:12,KA:15,OstHellmStad11:CartProd,
Sonntag89:HamCart,Zhu92:DirectProd}.

The article~\cite{Hellmuth:12c} surveyed the literature on hypergraph
products.  In addition to well-known constructions such as the Cartesian
product and the square product, it also considered various generalizations
of graph products that had rarely been studied, if at all. In particular,
it considered several variants of hypergraph products that generalize the
direct and strong product of \textit{graphs}, namely the direct products,
$\dirmax$ and $\dirnon$, and the strong product $\strmax$. In addition, it
treated the normal product $\strmin$, a generalization of the strong graph
product, and the direct product $\dirmin$, which were introduced by Sonntag
in the 1990's \cite{Sonntag90:NormalProd,Sonntag:thesis}.

Associativity is an important property of product operators. It is
  e.g.\ implicitly assumed in the standard definition of prime factors and
  thus for decompositions of a given hypergraph into prime factors w.r.t.\
  a given product \cite{IMIZ-75, Hammack:2011a}.

The survey \cite{Hellmuth:12c} mistakenly stated that the direct products
$\dirmax$ and $\dirnon$ and the strong product $\strmax$ are associative.
Here we give a simple counterexample for these cases and prove
associativity of the remaining hypergraph products.  This
contribution is an addendum to the results discussed in
\cite{Hellmuth:12c}.

\section{Preliminaries}

We start our brief discussion with the formal definition of hypergraphs,
and the hypergraph products in question.

A \emph{(finite) hypergraph} $H=(V,E)$ consists of a (finite) vertex set $V$ 
and a collection $E$ of non-empty subsets of $V$.
The \emph{rank} of a hypergraph $H=(V,E)$ is $r(H)=\max_{e\in E}|e|$.
A \emph{homomorphism} from $H_1=(V_1, E_1)$ into $H_2=(V_2, E_2)$ is a map
$\phi: V_1\rightarrow V_2$ for which $\phi(e)$ is an edge in $H_2$ whenever $e$
is an edge in $H_1$. 
A bijective homomorphism $\phi$ whose inverse
is also a homomorphism is called an \emph{isomorphism}.
A hypergraph is \emph{simple} if no edge is contained in any other edge
and each edge contains two or more vertices.

In what follows, we consider six hypergraph products $\Box,
\dirmin,\dirmax,\widetilde{\times},\strmin,$ and $\strmax$.  For each of
these, the vertex set of the product is the Cartesian product of the vertex
sets of its factors. To be more precise, given two hypergraphs
$H_1=(V_1,E_1)$ and $H_2=(V_2,E_2)$ and some product $\circledast
\in\{\Box, \dirmin,\dirmax,\widetilde{\times},\strmin,\strmax\}$, then
$V(H_1\circledast H_2) = V(H_1)\times V(H_2)$.  The edge sets of the
various products are defined as follows.

\medskip\noindent\textbf{Cartesian Product} $\boldsymbol{\Box}$:\\
This is an immediate generalization of the standard Cartesian product
of graphs. Its edges are
\begin{align*} 
  E (H_1\Box H_2)=&\big\{\{x\}\times f \mid x\in V(H_1), f\in E (H_2)\big\}\\
  &\cup \big\{e\times \{y\}\mid e\in E (H_1), y\in V(H_2)\big\}.
\end{align*}

\medskip There are several ways to generalize the direct product of graphs
to a product of hypergraphs. Because we want such products to coincide with
the usual direct product when the factors have rank~2 (and are therefore
graphs) it is necessary to impose some \textit{rank restricting} conditions
on the edges. This can be accomplished in different ways and
  leads to different variants of the direct and strong graph products,
  respectively.

\medskip\noindent\textbf{Minimal Rank Preserving Direct Product 
$\boldsymbol{\weithutzl{\times}}$:}\\
Given $e_1\in E_1$ and $e_2\in E_2$, let 
${r}^{-}_{e_1,e_2}=\min\{|e_1|,|e_2|\}$.
The edge set of this product is defined as		
\begin{equation*}
\begin{split}	
    E(H_1\weithutzl{\times} H_2):
    =\left\{e\in\binom{e_1\times e_2}{r^{-}_{e_1,e_2}}\mid e_i\in E_i
		\text{\ and\ }  |p_i(e)|=r^{-}_{e_1,e_2},\ i=1,2 \right\}.
\end{split}
\end{equation*}
The edges are thus the subsets $e\subseteq e_1\times e_2$ (with $e_i\in
E_i$) for which both
projections $p_i:e\to e_i$ are injective and at least one is surjective.

\medskip\noindent\textbf{Maximal Rank Preserving Direct Product 
$\boldsymbol{\weitdacherl{\times}}$:}\\
Given $e_1\in E_1$ and $e_2\in E_2$, let 
${r}^{+}_{e_1,e_2}=\max\{|e_1|,|e_2|\}$.  
The edge set of this product is defined as
\begin{equation*}
\begin{split}	
  E(H_1\weitdacherl{\times} H_2)
  := \left\{e\in\binom{e_1\times e_2}{r^{+}_{e_1,e_2}}\mid e_i\in E_i
    \text{\ and\ }  p_i(e)=e_i,\ i=1,2 \right\}.
\end{split}
\end{equation*}
The edges are thus the subsets $e\subseteq e_1\times e_2$ (with $e_i\in
E_i$) for which both projections $p_i:e\to e_i$ are surjective and at least
one is injective.

\medskip\noindent\textbf{Non-rank-preserving Direct Product 
$\boldsymbol{\widetilde{\times}}$:}
\begin{align*}
  E(H_1\widetilde{\times} H_2):=
  \big\{\{(x,y)\}\cup \big((e\setminus\{x\})\times(f\setminus\{y\}) \big)\mid 
  x\in e\in  E_1;\ y\in f\in E_2\big\}.	
\end{align*}
\medskip The strong product of graphs is defined as $E(G_1\boxtimes
G_2)=E(G_1\Box G_2)\cup E(G_1\times G_2)$. This leads to the following
generalizations to hypergraphs.

\medskip\noindent\textbf{Normal Product $\boldsymbol{\strmin}$:}
  $$E(H_1\strmin H_2)=E(H_1\Box H_2)\cup E(H_1\dirmin H_2).$$

\medskip\noindent\textbf{Strong Product $\boldsymbol{\strmax}$:}
  $$E(H_1\strmax H_2)=E(H_1\Box H_2)\cup E(H_1\dirmax H_2).$$

\section{Associativity and Non-associativity of Hypergraph Products}

It is well known that the Cartesian product is associative
\cite{Imrich67:Mengensysteme}. In contrast, we will show below that none of
the products $\dirmax$, $\dirnon$ and $\strmax$ is associative. Our
counterexamples require the following lemma.

\begin{lemma} \label{lem:dirmaxdirnon}
If $G$ and $H$ are simple hypergraphs with $r(G)=2$ and $r(H)\leq 3$,
then $G\dirmax H = G\dirnon H$.
\end{lemma}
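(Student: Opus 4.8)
The plan is to reduce the global equality $E(G \dirmax H) = E(G \dirnon H)$ to a comparison carried out one pair of factor-edges at a time. First I would record the structural consequences of the hypotheses: since $G$ is simple of rank $2$, every edge $e_1 \in E(G)$ satisfies $|e_1| = 2$, and since $H$ is simple of rank at most $3$, every edge $e_2 \in E(H)$ has $|e_2| \in \{2,3\}$. Next I would observe that in \emph{both} products every edge $e$ has projections $p_1(e)$ and $p_2(e)$ equal to the factor-edges from which it was built: this is immediate for $\dirmax$ (where $p_i(e) = e_i$ by definition), and for $\dirnon$ it follows because $e = \{(x,y)\} \cup \big((e_1 \setminus \{x\}) \times (e_2 \setminus \{y\})\big)$ projects onto $\{x\} \cup (e_1 \setminus \{x\}) = e_1$ and onto $\{y\} \cup (e_2 \setminus \{y\}) = e_2$, using $|e_1|,|e_2| \ge 2$. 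Consequently each edge determines its generating pair $(e_1,e_2)$ uniquely, so it suffices to show that for every fixed pair $(e_1,e_2) \in E(G) \times E(H)$ the $\dirmax$-edges and the $\dirnon$-edges contained in $e_1 \times e_2$ coincide.

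For such a fixed pair, with $|e_1| = 2 \le |e_2|$, I would encode edges as functions. In either product the second projection $p_2$ of an edge contained in $e_1 \times e_2$ is a bijection onto $e_2$ (for $\dirmax$ because $|e| = \max\{|e_1|,|e_2|\} = |e_2|$ with $p_2$ surjective; for $\dirnon$ because the second coordinates are exactly $\{y\} \cup (e_2 \setminus \{y\}) = e_2$, each occurring once). Hence every edge is the graph of a function $\phi = p_1 \circ p_2^{-1} : e_2 \to e_1$. Under this identification the $\dirmax$-edges are exactly the \emph{non-constant} $\phi$ (surjectivity of $p_1$ onto the two-element set $e_1$), whereas the $\dirnon$-edges are exactly the \emph{almost-constant} $\phi$, i.e.\ those sending a single chosen element $y$ to a chosen $x$ and all remaining elements of $e_2$ to the other point of $e_1$.

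The combinatorial heart of the argument is then the elementary fact that for $|e_2| \le 3$ a function $e_2 \to e_1$ into a two-element set is non-constant if and only if it is almost-constant: when $|e_2| = 2$ both notions reduce to ``bijective'', giving the two diagonal edges, and when $|e_2| = 3$ a non-constant function realizes a $1$--$2$ split and hence already sends exactly one element to the minority value, which is precisely the almost-constant form; in both cases the counts agree ($2$ and $2^{3}-2 = 6$ respectively). This yields per-pair equality and therefore $E(G \dirmax H) = E(G \dirnon H)$. The step I expect to require the most care is this last equivalence together with pinning down exactly where the hypothesis $r(H) \le 3$ enters: for $|e_2| \ge 4$ a balanced ($2$--$2$) function is non-constant but not almost-constant, so $\dirmax$ acquires edges that $\dirnon$ cannot produce and the identity fails. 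I would accordingly phrase the function-encoding so that the roles of the two projections, and the precise point at which $|e_2| = 3$ is exploited, are made fully explicit.
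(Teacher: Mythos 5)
Your proof is correct, and it shares the paper's overall skeleton---reducing the global equality to the equality of $e_1\dirmax e_2$ and $e_1\dirnon e_2$ for each fixed pair of factor edges, a reduction that is legitimate because both products' edge sets are by definition unions over such pairs---but your verification of the per-pair equality takes a genuinely different route. The paper simply writes out both edge sets explicitly: for $|e_2|=2$ the two diagonal pairs, and for $|e_2|=3$ all six three-element subsets, checking by inspection that the two lists coincide. You instead encode each edge as the graph of a function $\phi = p_1\circ p_2^{-1}\colon e_2\to e_1$ (legitimate because $|e_1|=2$ forces the second projection to be a bijection onto $e_2$ in both products, a point your sketch uses correctly but should state explicitly) and characterize the $\dirmax$-edges as the non-constant functions and the $\dirnon$-edges as the almost-constant ones, so that the lemma reduces to the observation that these two classes of functions coincide exactly when the domain has at most three elements. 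Your route costs a little more setup but buys two things the paper's enumeration does not: it isolates precisely where the hypothesis $r(H)\le 3$ enters, and it shows the lemma is sharp---for $|e_2|\ge 4$ a function realizing a $2$--$2$ split is non-constant but not almost-constant, which is exactly the phenomenon behind the paper's later counterexample to associativity. The paper's listing, by contrast, is shorter and immediately checkable, and your counts ($2$ and $2^3-2=6$) agree with its lists. One minor remark: your observation that each edge determines its generating pair $(e_1,e_2)$ uniquely is not needed for the direction you actually use, since equality of the per-pair edge sets already implies equality of their unions.
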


 \begin{proof}
  By definition, $V(G\dirmax H)=V(G\dirnon H)$.
  We need to show  that $E(G\dirmax H)= E(G\dirnon H)$.
  Given $*\in\{\sim ,\frown\}$ and edges $e_1,e_2$, let $e_1\dir e_2$ 
  denote the set 
  $E\big((e_1,\{e_1\})\dir (e_2,\{e_2\})\big)$.
  Then
  \[E(G\dir H)=\bigcup_{e_1\in E(G), e_2\in E(H)}(e_1\dir e_2).\]
  It suffices to show that $e_1\dirmax e_2=e_1\dirnon e_2$ holds for all 
  $e_1\in E(G)$ and $e_2\in E(H)$.
  Therefore, let $e_1\in E(G)$ and $e_2\in E(H)$ and assume first $|e_2|=2$.
  Say $e_1=\{x_1,y_1\}$ and $e_2=\{x_2,y_2\}$.
  Then $$e_1\dirmax e_2 = 
  \big\{\{(x_1,x_2),(y_1,y_2)\},\{(x_1,y_2),(y_1,x_2)\}\big\}=e_1\dirnon e_2.$$
  Now suppose $|e_2|=3$, say $e_2=\{x_2,y_2,z_2\}$.
  Then 
  \begin{align*}
    e_1\dirmax e_2 = &
    \big\{\{(x_1,x_2),(x_1,y_2),(y_1,z_2)\},\{(x_1,x_2),(y_1,y_2),(y_1,z_2)\},\\
    &\{(x_1,x_2),(y_1,y_2),(x_1,z_2)\},\{(y_1,x_2),(x_1,y_2),(x_1,z_2)\},\\
    &\{(y_1,x_2),(x_1,y_2),(y_1,z_2)\},\{(y_1,x_2),(y_1,y_2),(x_1,z_2)\}\big\}=e_1\dirnon e_2.
  \end{align*}
  Thus the assertion follows. 
\end{proof}

Now we present a counterexample showing that none of the products
$\dirmax$, $\dirnon$ and $\strmax$ is associative.

\bigskip\par\noindent\textbf{Counterexample.}  
Consider the two
hypergraphs  $G=(\{a,b\},\{\{a,b\}\})$ and $H=(\{x,y,z\},\{\{x,y,z\}\})$. 
For $\circledast\in\{\dirmax,\dirnon,\strmax\}$, we claim 
$G\circledast(G\circledast H) \not\cong (G\circledast G)\circledast H$.  
Put
$e=\{(a,(a,x)),(a,(b,y)),(b,(b,z))\}$. Note that $e$ is an edge of
$G\dirmax(G\dirmax H)$, and hence also of $G\strmax(G\strmax H)$.  However, the
set $\{((a,a),x),((a,b),y),((b,b),z)\}$ is not an edge in $(G\strmax
G)\strmax H$, thus also not in $(G\dirmax G)\dirmax H$, because
$\{(a,a),(a,b),(b,b)\}$ is neither an edge in $G\strmax G$ nor in $G\dirmax
G$. Thus the map $(g,(g',h))\mapsto((g,g'),h)$ 
is not an isomorphism $G\dirmax(G\dirmax H)\to (G\dirmax G)\dirmax H$,
nor is it an isomorphism $G\strmax(G\strmax H)\to(G\strmax G)\strmax H$. 
Moreover, the following argument shows there is no isomorphism at all.

It is shown in \cite{HON-14} that the number of edges in $H_1\dirmax
H_2$ is
  \[|E(H_1\dirmax H_2)|=  
  \sum_{e_1\in E_1,e_2\in E_2}(\min\{|e_1|,|e_2|\})!S_{\max\{|e_1|,|e_2|\},\min\{|e_1|,|e_2|\} },\]
  where
  $S_{n,k} = \frac{1}{k!}\sum_{j=0}^k (-1)^{k-j} \binom{k}{j} j^n$
  is a Stirling number of the second kind.
Furthermore,
\begin{eqnarray*}
 |E(H_1\strmax H_2)|&=&|E(H_1\dirmax H_2)|+|E(H_1\Box H_2)|\\
 &=&|E(H_1\dirmax H_2)|+|V(H_1)||E(H_2)|+|E(H_1)||V(H_2)|.
\end{eqnarray*}
Using this, we see that
$|E(G\dirmax(G\dirmax H))|=36\neq 12=|E((G\dirmax G)\dirmax H)|$ and
$|E(G\strmax(G\strmax H))|=82\neq 58=|E((G\strmax G)\strmax H)|$.
Thus $G\dirmax(G\dirmax H)\not\cong(G\dirmax G)\dirmax H$ and
$G\strmax(G\strmax H)\not\cong(G\strmax G)\strmax H$.

Moreover, Lemma~\ref{lem:dirmaxdirnon} implies 
$G\dirnon(G\dirnon H)=G\dirmax(G\dirmax H)\neq 
(G\dirmax G)\dirmax H=(G\dirnon G)\dirnon H$.

\bigskip 

The remainder of this contribution proves that the direct product $\dirmin$
and the normal product $\strmin$ are associative. To our knowledge, these
results have not yet appeared in the literature.
\begin{prop}
The direct product $\dirmin$ is associative.
 \label{lem:dirmin}
\end{prop}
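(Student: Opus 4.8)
The plan is to prove a single ``unfolded'' description of the edges that is manifestly symmetric in the three factors, and then read off associativity from it. Write $V=V_1\times V_2\times V_3$ and identify the vertex sets of both $(H_1\dirmin H_2)\dirmin H_3$ and $H_1\dirmin(H_2\dirmin H_3)$ with $V$ via the obvious flattening bijections $((a,b),c)\mapsto(a,b,c)$ and $(a,(b,c))\mapsto(a,b,c)$. With these identifications it suffices to establish the following claim: a set $e\subseteq V$ is an edge of $(H_1\dirmin H_2)\dirmin H_3$ if and only if there are edges $e_i\in E_i$ with $e\subseteq e_1\times e_2\times e_3$ such that every projection $p_i\colon e\to e_i$ is injective and $|e|=\min\{|e_1|,|e_2|,|e_3|\}$. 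Since this condition is symmetric under reassociating the coordinates, the identical statement (proved the same way) holds for $H_1\dirmin(H_2\dirmin H_3)$, so the two products have the same edge set, and the flattening maps furnish the desired isomorphism.

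For the forward direction I would simply unfold the definition. If $e$ is an edge of $(H_1\dirmin H_2)\dirmin H_3$, then $e\subseteq g\times e_3$ for some edge $g$ of $H_1\dirmin H_2$ and some $e_3\in E_3$, with both outer projections injective and $|e|=\min\{|g|,|e_3|\}$, while $g\subseteq e_1\times e_2$ has both inner projections injective and $|g|=\min\{|e_1|,|e_2|\}$. Composing injective maps shows each $p_i\colon e\to e_i$ is injective, and substituting for $|g|$ gives $|e|=\min\{|e_1|,|e_2|,|e_3|\}$, which is exactly the claimed condition.

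The converse is where the work lies, and it splits according to which factor carries the smallest edge. Let $e$ satisfy the condition, and write $p_{12}(e)$ for the projection of $e$ onto the first two coordinates. If $\min\{|e_1|,|e_2|,|e_3|\}=\min\{|e_1|,|e_2|\}$, then $g:=p_{12}(e)$ already has size $\min\{|e_1|,|e_2|\}$ (using injectivity of $p_1$), both projections injective and the smaller surjective, so $g$ is an edge of $H_1\dirmin H_2$, and one checks directly that $e$ is then an edge of $(H_1\dirmin H_2)\dirmin H_3$. The delicate case is $|e_3|<\min\{|e_1|,|e_2|\}$: here $|p_{12}(e)|=|e|=|e_3|<\min\{|e_1|,|e_2|\}$, so $p_{12}(e)$ is too small to be an edge of $H_1\dirmin H_2$ and cannot serve directly as the middle edge.

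The key step that resolves this case---and the reason $\dirmin$ is associative while $\dirmax$ is not---is an extension argument. The set $p_{12}(e)$ has injective projections into $e_1$ and $e_2$, so it is a partial matching in the complete bipartite structure on $e_1\times e_2$; since its size lies strictly below $\min\{|e_1|,|e_2|\}$ there remain unused vertices on both sides, and greedily adding pairs extends it to a set $g$ with $p_{12}(e)\subseteq g\subseteq e_1\times e_2$, $|g|=\min\{|e_1|,|e_2|\}$, both projections injective and the smaller surjective; thus $g$ is an edge of $H_1\dirmin H_2$. With this $g$ one has $e\subseteq g\times e_3$, the projection $e\to g$ is injective, the projection $e\to e_3$ is a bijection (injective with $|e|=|e_3|$), and $|e|=\min\{|g|,|e_3|\}$, so $e$ is an edge of $(H_1\dirmin H_2)\dirmin H_3$. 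Running the same argument for the other parenthesization yields the identical edge set, and the main obstacle throughout is precisely this extension step, which exploits that $\dirmin$ permits projections that are injective but need not be surjective onto the larger factor.
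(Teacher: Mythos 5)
Your proof is correct, and its organization differs genuinely from the paper's. The paper argues directly on the re-association bijection $\psi\colon ((x,y),z)\mapsto (x,(y,z))$: given an edge $e$ of $(H_1\dirmin H_2)\dirmin H_3$, it splits into cases according to whether the projection onto $H_3$ or onto $H_1\dirmin H_2$ is a full edge (the latter case splitting again into two subcases), in each case assembles the edges witnessing that $\psi(e)$ is an edge of $H_1\dirmin(H_2\dirmin H_3)$, and then declares the converse analogous. You instead prove a parenthesization-free description of the edge set---$e\subseteq e_1\times e_2\times e_3$ with all three projections $p_i\colon e\to e_i$ injective and $|e|=\min\{|e_1|,|e_2|,|e_3|\}$---and obtain associativity from the symmetry of that description. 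The combinatorial core is the same in both arguments: composition of injective projections, plus the fact that a partial matching in $e_i\times e_j$ of size less than $\min\{|e_i|,|e_j|\}$ extends greedily to one of size exactly $\min\{|e_i|,|e_j|\}$. Notably, the paper uses this extension fact only implicitly (in its second case it asserts without justification that $\{(y_1,z_1),\ldots,(y_r,z_r)\}$ ``is a subset of an edge in $H_2\dirmin H_3$''), whereas you state it and prove it; in that respect your writeup is the more complete one. Your packaging also buys more than the paper's: the symmetry of the flat characterization makes the ``converse follows analogously'' step unnecessary, the characterization extends verbatim to any number of factors, and isolating the extension lemma pinpoints exactly why $\dirmin$ is associative while $\dirmax$ is not---for $\dirmax$, projections must be surjective, so the projection of a flattened edge onto two coordinates can be strictly larger than every edge of the corresponding two-factor product and cannot be repaired, which is precisely what the paper's counterexample exploits.
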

\begin{proof}
  Let $H_1=(V_1,E_1),$ $H_2=(V_2,E_2),$ and $H_3=(V_3,E_3)$ be
  hypergraphs and consider the map
  $\psi:V\big(H_1\dirmin (H_2\dirmin H_3)\big)\rightarrow V\big((H_1\dirmin H_2)\dirmin H_3\big)$ defined as
  $(x,(y,z))\mapsto((x,y),z)$. 
  We will show that $\psi$ is an isomorphism. Clearly $\psi$ is
  bijective. Hence it remains to show the isomorphism property, that is,
  $e$ is an edge in $H_1\dirmin(H_2\dirmin H_3)$ if and only if $\psi(e)$
  is an edge in $(H_1\dirmin H_2)\dirmin H_3$.  Let
  $e=\{((x_1,y_1),z_1),\ldots,((x_r,y_r),z_r)\}$ be an edge in $(H_1\dirmin
  H_2)\dirmin H_3$.  There are two cases that can occur.

  First, $\{z_1,\ldots,z_r\}$ is an edge in $H_3$ and
  $\{(x_1,y_1),\ldots,(x_r,y_r)\}$ is therefore a subset of an edge in
  $H_1\dirmin H_2$. Hence $\{x_1,\ldots,x_r\}$ and $\{y_1\ldots,y_r\}$ must
  be subsets of edges in $H_1$ and $H_2$, respectively.  But then
  $\{(y_1,z_1),\ldots,(y_r,z_r)\}$ is an edge in $H_2\dirmin H_3$, which implies that
  $\psi (e)=\{(x_1,(y_1,z_1)),\ldots,(x_r,(y_r,z_r))\}$ is an edge in
  $H_1\dirmin(H_2\dirmin H_3)$.  

  Second, $\{(x_1,y_1),\ldots,(x_r,y_r)\}$ is an edge in $H_1\dirmin H_2$
  and $\{z_1,\ldots,z_r\}$ is a subset of an edge in $H_3$.  Then
  $\{x_1,\ldots,x_r\}$ is an edge in $H_1$ and $\{y_1\ldots,y_r\}$ is a
  subset of an edge in $H_2$, or vice versa.  In the first case
  $\{(y_1,z_1),\ldots,(y_r,z_r)\}$ is a subset of an edge in $H_2\dirmin
  H_3$, hence $\psi (e)$ is an edge in $H_1\dirmin (H_2\dirmin H_3)$, and
  in the second case $\{(y_1,z_1),\ldots,(y_r,z_r)\}$ is an edge in
  $H_2\dirmin H_3$ and thus $\psi (e)$ is an edge in 
  $H_1\dirmin(H_2\dirmin H_3)$.  

  This implies that if $e$ is an edge in $(H_1\dirmin H_2)\dirmin H_3$,
  then $\psi(e)$ is an edge in $H_1\dirmin (H_2\dirmin H_3)$. The
  converse
  follows analogously. Thus $(H_1\dirmin H_2)\dirmin H_3\cong H_1\dirmin
  (H_2\dirmin H_3)$.
\end{proof}

\begin{prop} 
  The normal product $\strmin$ is associative.
\end{prop}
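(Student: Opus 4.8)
The plan is to use the same coordinate bijection $\psi\colon (x,(y,z))\mapsto((x,y),z)$ as in Proposition~\ref{lem:dirmin} and to show it is an isomorphism $H_1\strmin(H_2\strmin H_3)\to (H_1\strmin H_2)\strmin H_3$. Since $\psi$ is clearly a vertex bijection, everything reduces to matching up edge sets, and the efficient way to do this is to find a single description of the edges of each triple product that is manifestly symmetric in the three factors, so that the two bracketings produce literally the same set.

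First I would record a uniform description of the edges of a binary normal product. Writing $e_i=p_i(e)$ for the projections of a candidate edge $e\subseteq V_1\times V_2$, I claim that $e\in E(H_1\strmin H_2)$ exactly when (i) at least one projection is a full edge, i.e.\ $p_i$ is injective on $e$ and $e_i\in E_i$ for some $i$, and (ii) each projection is \emph{admissible}, meaning that for every $i$ either $|e_i|=1$ or $p_i$ is injective on $e$ with $e_i$ contained in some edge of $H_i$. This is just a rereading of the two defining clauses: the Cartesian edges are those with one admissible singleton coordinate and one full-edge coordinate, while the $\dirmin$ edges are those with both projections injective and at least one a full edge, the size constraint $|e|=\min\{|e_1|,|e_2|\}$ being automatic once one coordinate is a full edge and the other injective. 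I would then propose the analogous symmetric three-coordinate condition
\[
\mathcal{E}=\Big\{e\subseteq V_1\times V_2\times V_3 : e\neq\emptyset,\ \exists i\ (p_i|_e \text{ injective},\ p_i(e)\in E_i),\ \forall i\ p_i(e)\text{ admissible}\Big\},
\]
and show it describes both triple products, after which $\psi$ is an isomorphism because $\mathcal{E}$ does not distinguish the two bracketings.

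The forward inclusions $E\big((H_1\strmin H_2)\strmin H_3\big)\subseteq\mathcal{E}$ and $E\big(H_1\strmin(H_2\strmin H_3)\big)\subseteq\mathcal{E}$ I would obtain by unfolding $E(K\strmin H_3)=E(K\Box H_3)\cup E(K\dirmin H_3)$ with $K=H_1\strmin H_2$, and then expanding the inner $\strmin$ by the binary description; each resulting edge type (pure Cartesian, Cartesian-over-direct, direct-over-Cartesian, direct-over-direct) visibly has one full-edge coordinate and admissible projections, the one point to check being that passing to a subset of an inner edge keeps a coordinate admissible. The reverse inclusion carries the real content: given $e\in\mathcal{E}$ I must exhibit it as an edge of the iterated product. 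Grouping the first two coordinates, I would set $S=p_{12}(e)$ and $e_3=p_3(e)$ and argue by cases on the mode of each coordinate (singleton, proper sub-edge, or full edge). When $S$ is itself a binary $\strmin$-edge the combination with $e_3$ via $\Box$ or $\dirmin$ is immediate; the delicate cases are those in which the grouped coordinates are merely admissible (neither is a full edge) while the third coordinate carries the full edge, so that $S$ must first be enlarged to a genuine inner $\dirmin$-edge.

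The main obstacle is exactly this enlargement, and I expect to handle it with a small matching lemma: inside a complete ``box'' $\hat e_1\times\hat e_2$ with $\hat e_i\in E_i$, any partial matching (a subset on which both projections are injective) extends to one saturating the smaller side, hence to a $\dirmin$-edge of $H_1\strmin H_2$ of size $\min\{|\hat e_1|,|\hat e_2|\}$. Applying this to $S$ yields an inner edge $f\supseteq S$ with $|f|\ge|e|=|e_3|$, whence $e\subseteq f\times e_3$ realizes $e$ as a $K\dirmin H_3$-edge; the remaining mixed configurations (a singleton coordinate forcing a Cartesian inner edge, or the third coordinate being a proper sub-edge) are finite casework of the same flavour. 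Since $\mathcal{E}$ is symmetric under permuting the three factors, the identical argument with the grouping $(2,3)$ gives $E\big(H_1\strmin(H_2\strmin H_3)\big)=\mathcal{E}$ as well, and associativity follows. I would invoke Proposition~\ref{lem:dirmin} and associativity of $\Box$ only as consistency checks on the ``pure'' edge types, since the uniform description $\mathcal{E}$ subsumes them.
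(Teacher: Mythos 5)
Your proof is correct, but it is organized genuinely differently from the paper's. The paper argues by direct condition-chasing: it writes down the four edge conditions (i)--(iv) for $(H_1\strmin H_2)\strmin H_3$, expands (i), (iii), (iv) into eleven sub-cases according to the type of the inner edge $e_{1,2}$, and checks that each sub-case implies one of four conditions (I)--(IV) that certify $\psi(e)\in E\big(H_1\strmin(H_2\strmin H_3)\big)$, with the inverse direction asserted to follow ``in the same way.'' You instead interpose a bracket-free, coordinate-symmetric description $\mathcal{E}$ of the triple product's edges (one coordinate a full edge with injective projection, every coordinate admissible) and prove that each bracketing's edge set equals $\mathcal{E}$, so that associativity (and, incidentally, commutativity and the well-definedness of $n$-fold products) falls out of the symmetry of $\mathcal{E}$. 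Two points of comparison are worth making. First, your matching-extension lemma --- any subset of $\hat e_1\times\hat e_2$ with both projections injective extends to a $\dirmin$-edge saturating the smaller side --- makes explicit a step the paper leaves tacit: for instance, the paper's implication (iii\,c) $\Rightarrow$ (III) requires one to actually exhibit an edge $e_{2,3}\in E(H_2\strmin H_3)$ containing $p_{2,3}(e)$, and when $|e|<\min\{|e_2|,|e_3|\}$ the set $p_{2,3}(e)$ is a non-maximum partial matching in $e_2\times e_3$, so precisely your greedy extension is needed. Second, the price of your route is the reverse inclusion $\mathcal{E}\subseteq E\big((H_1\strmin H_2)\strmin H_3\big)$, which concentrates the casework that the paper spreads over its sub-cases; your handling of it (singleton coordinates yield Cartesian inner edges, full-edge groupings combine immediately, merely-admissible groupings are enlarged via the matching lemma) covers all configurations, and your binary characterization of $E(H_1\strmin H_2)$ --- including the observation that the cardinality constraint $|e|=\min\{|e_1|,|e_2|\}$ is automatic once one projection is an injective full edge and the other is injective --- is accurate against the paper's definitions. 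Both proofs are case analyses at heart, but yours trades the paper's asymmetric two-directional verification for a single symmetric invariant plus one clean combinatorial lemma.
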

\begin{proof}
As in the previous proof,
consider the bijection $\psi:V\big((H_1\strmin H_2)\strmin
  H_3\big)\rightarrow V\big(H_1\strmin (H_2\strmin H_3)\big)$ defined as
  $((x,y),z)\mapsto(x,(y,z))$.
We claim this is an isomorphism.

Let $p_{1,2}$ be the projection from
$(H_1\boxtimes H_2)\boxtimes H_3$ onto $H_1\boxtimes H_2$, defined by
$p_{1,2}(((x,y),z))=(x,y)$. Let $p_{2,3}$ be projection 
from $H_1\boxtimes(H_2\boxtimes H_3)$ to $H_2\boxtimes H_3$, 
whereas $p_j$ is the usual projection to $H_j$.
By definition,
$e=\{((x_1,y_1),z_1),\ldots,((x_r,y_r),z_r)\}$ is an edge in $(H_1\strmin
H_2)\strmin H_3$ if and only if one of the following conditions is
satisfied:
\begin{itemize}
\item[(i)]   $p_{1,2}(e)=e_{1,2}\in E(H_1\strmin H_2)$ and $|p_3(e)|=1$,
\item[(ii)]  $p_{3}(e)=e_{3}\in E(H_3)$ and $|p_{1,2}(e)|=1$,
\item[(iii)] $p_{1,2}(e)=e_{1,2}\in E(H_1\strmin H_2)$ and 
  $p_3(e)\subseteq e_3\in E(H_3)$
  and $|e|=|e_{1,2}|=|p_{1,2}(e)|=|p_3(e)|\leq|e_3|$, 
\item[(iv)] $p_3(e)=e_3\in E(H_3)$ and 
  $p_{1,2}(e)\subseteq e_{1,2}\in E(H_1\strmin H_2)$
  and $|e|=|e_{3}|=|p_{3}(e)|=|p_{1,2}(e)|\leq|e_{1,2}|$.
\end{itemize}
   
\medskip\noindent
Condition (i) is equivalent to one of the following conditions holding:
\begin{itemize}
\item[(i\,a)] $p_1(e)=p_1(e_{1,2})=e_1\in E(H_1)$ and
  $|p_2(e)|=|p_2(e_{1,2})|=|p_3(e)|=1$, or
\item[(i\,b)] $p_2(e)=p_2(e_{1,2})=e_2\in E(H_2)$ and
  $|p_1(e)|=|p_1(e_{1,2})|=|p_3(e)|=1$, or
\item[(i\,c)] $p_1(e)=p_1(e_{1,2})=e_1\in E(H_1)$ and
  $p_2(e)=p_2(e_{1,2})\subseteq e_2\in E(H_2)$ and
  $|e|=|e_{1}|=|p_{1}(e)|=|p_2(e)|\leq|e_2|$ and $|p_3(e)|=1$, or
\item[(i\,d)] $p_2(e)=p_2(e_{1,2})=e_2\in E(H_2)$ and
  $p_1(e)=p_1(e_{1,2})\subseteq e_1\in E(H_1)$ and
  $|e|=|e_{2}|=|p_{2}(e)|=|p_1(e)|\leq|e_1|$ and $|p_3(e)|=1$.
\end{itemize}

\medskip\noindent
Condition (iii) is equivalent to one of the following conditions holding:
\begin{itemize}
\item[(iii\,a)] $p_1(e)=p_1(e_{1,2})=e_1\in E(H_1)$ and
  $|p_2(e)|=|p_2(e_{1,2})|=1$ and $p_3(e)\subseteq e_3\in E(H_3)$ and
  $|e|= |p_3(e)|\leq |e_3|$, or
\item[(iii\,b)] $p_2(e)=p_2(e_{1,2})=e_2\in E(H_2)$ and
  $|p_1(e)|=|p_1(e_{1,2})|=1$ and $p_3(e)\subseteq e_3\in E(H_3)$ and
  $|e|= |p_3(e)|\leq |e_3|$, or
\item[(iii\,c)] $p_1(e)=p_1(e_{1,2})=e_1\in E(H_1)$ and
  $p_2(e)=p_2(e_{1,2})\subseteq e_2\in E(H_2)$ and
  $|e|=|e_{1}|=|p_{1}(e)|=|p_2(e)|\leq|e_2|$ and $p_3(e)\subseteq e_3\in
  E(H_3)$ and $|e|= |p_3(e)|\leq |e_3|$, or
\item[(iii\,d)] $p_2(e)=p_2(e_{1,2})=e_2\in E(H_2)$ and
  $p_1(e)=p_1(e_{1,2})\subseteq e_1\in E(H_1)$ and
  $|e|=|e_{2}|=|p_{2}(e)|=|p_1(e)|\leq|e_1|$ and $p_3(e)\subseteq e_3\in
  E(H_3)$ and $|e|= |p_3(e)|\leq |e_3|$.
\end{itemize}

\medskip\noindent
Condition (iv) is equivalent to one of the following conditions holding:
\begin{itemize}
\item[(iv\,a)] $p_3(e)=e_3\in E(H_3)$ and
  $p_1(e)=p_1(p_{1,2}(e))\subseteq e_1\in E(H_1)$ and
  $|p_2(e)|=|p_2(p_{1,2})(e)|=1$ and $|e|=|p_3(e)|=|e_3|=|p_1(e)|\leq
  |e_1|$, or
\item[(iv\,b)] $p_3(e)=e_3\in E(H_3)$ and
  $p_2(e)=p_2(p_{1,2}(e))\subseteq e_2\in E(H_2)$ and
  $|p_1(e)|=|p_1(p_{1,2})(e)|=1$ and $|e|=|p_3(e)|=|e_3|=|p_2(e)|\leq
  |e_2|$, or
\item[(iv\,c)] $p_3(e)=e_3\in E(H_3)$ and
  $p_1(e)=p_1(p_{1,2}(e))\subseteq e_1\in E(H_1)$ and
  $p_2(e)=p_2(p_{1,2}(e))\subseteq e_2\in E(H_2)$ and
  $|e|=|p_3(e)|=|e_3|=|p_1(e)|=|p_2(e)|\leq \min_{i=1,2}|e_i|$.
\end{itemize}

\medskip\noindent
Then Condition (i\,a) implies the following condition:
\begin{itemize}
\item[(I)] $p_1(e)=e_1\in E(H_1)$ and $|p_{2,3}(e)|=1$.
\end{itemize}

\medskip\noindent
Conditions (i\,b), (ii), (iii\,b) and (iv\,b) each imply the following 
condition:
\begin{itemize}
\item[(II)] $|p_1(e)|=1$ and $p_{2,3}(e)=e_{2,3}\in E(H_2\strmin H_3)$.
\end{itemize}

\medskip\noindent
Conditions (i\,c), (iii\,a) and (iii\,c) each imply the following condition:
\begin{itemize}
\item[(III)] $p_1(e)=e_1\in E(H_1)$ and $p_{2,3}(e)\subseteq e_{2,3}\in
  E(H_2\strmin H_3)$ and
  $|e|=|e_{1}|=|p_{1}(e)|=|p_{2,3}(e)|\leq|e_{2,3}|$.
\end{itemize}

\medskip\noindent
Conditions (i\,d), (iii\,d), (iv\,a) and (iv\,c) each imply the following 
condition:
\begin{itemize}
\item[(IV)] $p_1(e)\subseteq e_1\in E(H_1)$ and $p_{2,3}(e)= e_{2,3}\in
  E(H_2\strmin H_3)$ and
  $|e|=|e_{2,3}|=|p_{2,3}(e)|=|p_{1}(e)|\leq|e_{1}|$.
\end{itemize}

\medskip
By definition of the normal product, if any of the Conditions (I)--(IV) 
are satisfied, then
$\psi(e)=\{(x_1,(y_1,z_1)),\ldots,(x_r,(y_r,z_r))\}$ is an
edge in $H_1\strmin(H_2\strmin H_3)$.
It follows that $\psi$ is a homomorphism.
In the same way, the inverse $(x,(y,z))\mapsto ((x,y),z)$ is also a 
homomorphism.
\end{proof}


\bibliographystyle{plain} 
\bibliography{HG_final-corrigendum}

\end{document}